\newtheorem{theorem}{Theorem}[section]
\newtheorem{lem}[theorem]{Lemma}
\newtheorem{prop}[theorem]{Proposition}
  \newtheorem{defn}[theorem]{Definition}
\newtheorem{exam}[theorem]{Example}
\numberwithin{equation}{section}
\newcommand{\BC}{{\Bbb C}}
\newcommand{\BR}{{\Bbb  R}}
\newcommand{\BK}{{\Bbb K}}
\newcommand{\ext}{{\rm ext}}
\newcommand{\St}{{\rm St}}
\newcommand{\ch}{{\rm ch}}
\begin{document}

\title[Isometries between completely regular vector-valued function
spaces]{Isometries between completely regular vector-valued function
spaces}

\author{Mojtaba  Mojahedi and  Fereshteh Sady$^1$}

\subjclass[2010]{Primary 47B38, 47B33, Secondary 46J10}

\keywords{real-linear isometries, vector-valued function spaces, T-sets, strictly convex}

\maketitle
\begin{center}

\address{{\em   Department of Pure
Mathematics, Faculty of \\ Mathematical Sciences,
 Tarbiat Modares University,\\ Tehran, 14115-134, Iran}}

\vspace*{.25cm}
  \email{mojtaba.mojahedi@modares.ac.ir},
  \email{sady@modares.ac.ir}
\end{center}
\footnote{$^1$ Corresponding author}

\maketitle

\begin{abstract}
In this paper, first we study surjective isometries (not necessarily linear)  between completely regular subspaces $A$ and $B$ of $C_0(X,E)$ and $C_0(Y,F)$  where $X$ and $Y$ are locally compact Hausdorff spaces and $E$ and $F$ are normed spaces, not assumed to be  neither  strictly convex nor complete. We show that for a class of normed spaces  $F$ satisfying a new defined property related  to their  $T$-sets, such an isometry is a (generalized) weighted composition operator up to a translation. Then we apply the result to study surjective isometries between $A$ and $B$ whenever $A$ and $B$ are equipped with certain norms rather than the supremum norm. Our results unify and generalize some recent results in this context. 
 \end{abstract}

\maketitle


\section{Introduction }
Considerable works have been done to characterize linear  isometries  between various Banach spaces of functions. The result for surjective isometries between Banach spaces of all continuous functions was initiated by Banach and Stone as the weighted composition operators. There are various generalizations of this theorem   based on different techniques. For instance, for the characterization of (surjective) isometries between subspaces of continuous scalar-valued functions endowed with the supremum norm or special complete norms, see \cite{Del,Font,Var1,Kaw,Roy,Rao-Roy}.
The first vector-valued version of  the Banach-Stone theorem was given by Jerison in \cite{Jer}. By Jerison's result, if $E$ is a strictly convex Banach space, then any surjective linear isometry on the Banach space $C(X,E)$ of all  $E$-valued continuous functions on a compact Hausdorff space $X$ is a generalized weighted composition operator.  There are similar results for the case where the dual space of $E$ is strictly convex \cite{Lau} and in general for the case that $E$ has trivial centralizer, see \cite[Cor. 7.4.11]{Flem2}. 

Surjective linear  isometries between subspaces  of  vector-valued Lipschitz functions with respect to particular complete norms  have been studied, for instance,  in   \cite{Ar, Bot1,Var2,Ran}. In \cite{Ar,Var2} the target spaces are assumed to be strictly convex. In \cite{Bot1} the values of the Lipschitz functions are taken in a quasi-sub-reflexive Banach space with trivial centralizer and this result has been improved in \cite{Ran} without the quasi-sub-reflexivity assumption.  We should note that in a strictly convex normed space $E$, any maximal convex subset of the unit sphere of $E$ is a singleton. However, in \cite{Jam} there are some results for surjective supremum  norm isometries between vector-valued  spaces of continuous functions with values in a Banach space $E$ whose unit sphere contains a maximal convex subset which is a singleton.  Characterizatin of  surjective  isometries on  spaces of vector-valued continuously differentiable functions with values in a finite-dimensional real Hilbert space can be found in \cite{Bot2}. Surjective isometries (not necessarily linear) between   spaces of vector-valued absolutely continuous functions  with values in a strictly convex normed space have been studied in \cite{Hos}. In the recent paper \cite{Hat} of Hatori, he studies  linear isometries between certain Banach algebras with values in $C(Y)$, where $Y$ is a  compact Hausdorff space. These Banach algebras include  the  $C(Y)$-valued Banach algebra of Lipschitz functions and $C(Y)$-valued Banach algebra of continuously differentiable functions. 
Recently, in \cite{MS}, isometries on certain subspaces of vector-valued continuous functions with respect to the supremum norm and the other (not necessarily complete) norms have been characterized. We should note that,  in \cite{MS}, neither the target space itself nor its dual space is  assumed to be strictly convex, but they satisfy a  mild condition related to the maximal convex subsets of the unit spheres.  

In this paper we deal with surjective (not necessarily linear) isometries between completely regular subspaces of vector-valued continuous functions endowed with either the  supremum  norm or other certain norms. Introducing a property,  called $(D_w)$, on the target spaces which is considerably weaker than the  strict convexity, we obtain some characterizations for such isometries as generalized weighted composition operators.

\section{Preliminaries}

Throughout this paper $\BK$ stands for the scalar fields $\BR$ and
$\BC$. For a normed space $E$ over $\Bbb K$, we denote its closed unit ball by $E_1$
and its unit sphere by $S(E)$. We use the notations $E^*$ and ${\rm ext}(E^*_1)$
for the dual space of $E$ and the set of extreme points of $E^*_1$, respectively.

For a $\Bbb K$-normed space $E$, a  subset $S $ of  $E$ is said to be {\em  norm additive} if  for any finite collection $e_1,e_2,....,e_n$ of elements of $S$,
\[\|e_1+\cdots +e_n\|=\|e_1\|+\cdots \|e_n\|.\] A maximal norm additive subset of $E$ is called a {\em $T$-set} in $E$. If $e_1,e_2\in E$ such that  $\|e_1+e_2\|=\|e_1\|+\|e_2\|$, then for all $s,t\ge 0$ we have
$\|se_1+te_2\|=s \|e_1\|+t \|e_2\|$, see \cite[Lemma 4.1]{Jer}. Hence   for any $T$-set $S$ in $E$ and any $t\ge 0$ we have $tS \subseteq S$.

For each $e\in S(E)$ the star-like set ${\rm St}(e)$ is defined as
\[ \St(e)=\{ e'\in S(E): \|e+e'\|=2 \}.\] It is well-known that  $\St(e)$  is the union
of all maximal convex subsets of $S(E)$ containing $e$. Clearly, in the case that  $E$ is
strictly convex, we have  $\St(e)=\{e\}$ for all
$e\in S(E)$. We also note that if   $e\in S(E)$ such that  $\St(e)=\{e\}$, then $e$ is an extreme point of $E_1$, that is  $E$ is strictly convex if and only if $\St(e)=\{e\}$ for all $e\in S(E)$.
For each  $e\in S(E)$ and  $e'\in \St(e)$ we have $\|re+e'\|>r=\|re\|$ for $r>0$. Motivated by this, for each $u\in E$ we put
\[ \St_w(u)=\{e'\in S(E) : \|u+e'\|>\|u\|\}. \]
It should be noted that if $e'\in \St_w(u)$, then $\|u+re'\|> \|u\|$ for all $r\ge 1$, that is $e' \in  \St_w(\frac {u}{r})$  for all $r\ge 1$.

For a topological space $X$ and a normed space $E$ over $\Bbb K$, let $C(X,E)$ be the space of all continuous $E$-valued functions on $X$. For  an element $v\in E$,  the constant function $x \mapsto v$ in $C(X,E)$ will be denoted by $\hat{v}$.
In the  case that  $X$ is locally compact, $C_0(X,E)$ denotes the normed space of all continuous $E$-valued functions on $X$ vanishing at infinity, with the supremum norm $\|\cdot\|_\infty$.
By \cite[Theorem 2.3.5]{Flem1}, for  $\mathcal{Z}=C_0(X,E)$  we have
\[ {\rm ext}(\mathcal{Z}^*_1)=\{v^*\circ \delta_x: v^*\in {\rm ext}(E^*_1), x\in X\},\]
where for each $x\in X$, $\delta_x: C_0(X,E)\longrightarrow E$ is defined by $\delta_x(f)=f(x)$. Moreover,  if $A$ is a subspace of $C_0(X,E)$ then, by \cite[Corollary 2.3.6]{Flem1}, we have
\[ \ext(A^*_1)\subseteq \{v^*\circ \delta_x: v^*\in {\rm ext}(E^*_1), x\in X\}.\]
The {\em Choquet boundary} of $A$ which is denoted by  $\ch(A)$, consists of all points $x\in X$ such that   $v^*\circ \delta_x$ is an extreme point of $A^*_1$ for some $v^*\in {\rm ext}(E^*_1)$. Then  $ \ch(A)$ is a boundary for $A$, that is for each $f\in A$ there exists a point $x\in \ch(A)$ such that
$\|f\|_\infty=\|f(x)\|$.

By \cite[Lemma 7.2.2]{Flem2} for a locally compact Hausdorff space $X$ and a normed space $E$, if $S$ is a $T$-set in    $E$ and $x\in X$, then  the set
\[(S,x)=\{f\in C_0(X,E) : f(x)\in S \ ,\  \|f\|_\infty=\|f(x)\|\}\]
is a $T$-set in $C_0(X,E)$. Conversely, any $T$-set in $C_0(X,E)$ is of this form.

For any $T$-set $S$ in a normed space $E$, we put
\[\Gamma_S=\{v^*\in S(E^*) :  v^*(u)=\|u\|\;\; {\rm  for\;  all\;}  u\in S\}.\]  By \cite[Lemma 7.2.4]{Flem2} we have   $\Gamma_S\cap \ext(E_1^*)\neq \emptyset$.
 We should note that the Lemmas  7.2.2 and 7.2.4 in \cite{Flem2}  have  been stated  for the case that $E$ is a Banach space, however, the completeness of $E$ has no role in the proofs.
It is obvious that for any $T$-set $R$ in $E$, the corresponding set $\Gamma_R$ is a convex subset of $S(E^*)$ which is  norm additive.

The following proposition, which states some elementary properties of $T$-sets,  is easily verified. For the sake of completeness we state it here.
\begin{prop}\label{P0}
Let $E$ be a normed space over $\Bbb K$. Then

{\rm (i)} For any T-set $R$ in $E$ and $w^*\in \Gamma_R$ we have $R=\{u\in E : w^*(u)=\|u\|\}$.

{\rm (ii)} For distinct  T-sets $R_1$ and $R_2$ in $E$, if $R_1\cap R_2\neq \{0\}$, then for any $w_1^*\in \Gamma_{R_1}$ and any $w_2^*\in \Gamma_{R_2}$  we have $w_1^*\in \St(w_2^*)$.

{\rm (iii)} For distinct T-sets  $R_1$ and $R_2$ in $E$ we have  $\Gamma_{R_1}\cap \Gamma_{R_2}=\emptyset$.

{\rm (iv)} If $E^*$ is  strictly convex, then for any T-set $R$ in $E$, $\Gamma_R$ is a singleton.
\end{prop}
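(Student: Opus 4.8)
The plan is to prove (i) first, as the remaining three parts follow from it almost immediately. Fix a $T$-set $R$ and choose $w^*\in\Gamma_R$; such a $w^*$ exists because $\Gamma_R\cap\ext(E_1^*)\neq\emptyset$ by the cited Lemma 7.2.4 of \cite{Flem2}. Set $M=\{u\in E: w^*(u)=\|u\|\}$. The inclusion $R\subseteq M$ is immediate from the definition of $\Gamma_R$. For the reverse inclusion I would verify that $M$ is norm additive: for $u_1,\dots,u_n\in M$ the triangle inequality gives $\|u_1+\cdots+u_n\|\le \|u_1\|+\cdots+\|u_n\|$, while linearity of $w^*$ together with $\|w^*\|=1$ gives $\|u_1\|+\cdots+\|u_n\|=w^*(u_1)+\cdots+w^*(u_n)=w^*(u_1+\cdots+u_n)\le\|u_1+\cdots+u_n\|$, so equality holds throughout and $M$ is norm additive. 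Since $R$ is a \emph{maximal} norm additive set, $R\subseteq M$, and $M$ is norm additive, maximality forces $R=M$, which is exactly (i).

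For (ii), the hypothesis $R_1\cap R_2\neq\{0\}$ provides a nonzero $u\in R_1\cap R_2$, and rescaling by a positive factor (permissible since $tS\subseteq S$ for any $T$-set $S$) lets me assume $\|u\|=1$. Given $w_1^*\in\Gamma_{R_1}$ and $w_2^*\in\Gamma_{R_2}$, I then compute $(w_1^*+w_2^*)(u)=w_1^*(u)+w_2^*(u)=2$, whence $\|w_1^*+w_2^*\|\ge 2$; the reverse bound is the triangle inequality together with $\|w_1^*\|=\|w_2^*\|=1$, so $\|w_1^*+w_2^*\|=2$, i.e. $w_1^*\in\St(w_2^*)$.

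Part (iii) is a direct consequence of (i): if some $w^*$ lay in $\Gamma_{R_1}\cap\Gamma_{R_2}$, then applying (i) to each $T$-set would give $R_1=\{u:w^*(u)=\|u\|\}=R_2$, contradicting $R_1\neq R_2$. For (iv), strict convexity of $E^*$ means $\St(v^*)=\{v^*\}$ for every $v^*\in S(E^*)$; so if $w_1^*,w_2^*\in\Gamma_R$, the computation in (ii) applied with $R_1=R_2=R$ (which intersect in $R\neq\{0\}$, since a maximal norm additive set must contain nonzero vectors) yields $w_1^*\in\St(w_2^*)=\{w_2^*\}$, hence $w_1^*=w_2^*$ and $\Gamma_R$ is a singleton. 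The arguments are routine; the only genuine content is (i), and within it the two points worth care are the verification that $M$ is norm additive and the invocation of maximality. I would also flag the standing use of the nonemptiness of $\Gamma_R$ (Lemma 7.2.4) and of the fact that a $T$-set, being maximal, contains nonzero elements, since otherwise the choice of $u$ in (ii) and (iv) would be meaningless.
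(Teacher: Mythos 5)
Your proposal is correct and follows essentially the same route as the paper: the reverse inclusion in (i) via norm additivity of $\{u: w^*(u)=\|u\|\}$ and maximality of $R$, the computation $(w_1^*+w_2^*)(u)=2\|u\|$ for (ii), (iii) as an immediate consequence of (i), and (iv) from strict convexity forcing $\|w_1^*+w_2^*\|<2$ for distinct unit functionals. Your extra care in flagging that a $T$-set contains nonzero elements is a reasonable (if routine) addition.
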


\begin{proof}
(i) It is obvious that $R\subseteq \{u\in E : w^*(u)=\|u\|\}$. Now since  for each $u_1, ..., u_n\in E$ with $w^*(u_i)=\|u_i\|$, $i=1,...,n$, we have \[\|u_1+\cdots +u_n\|=\|u_1\|+\cdots +\|u_n\|,\] the maximality of $R$ implies that $R=\{u\in E : w^*(u)=\|u\|\}$.

(ii)  Let  $u\in R_1\cap R_2$ be nonzero. Since $(w_1^*+w_2^*)(u)=2\|u\|$ it follows that $\|w_1^*+w_2^*\|=2$ that is $w_1^*\in \St(w_2^*)$.

(iii) Assume on the contrary that  $\Gamma_{R_1}\cap \Gamma_{R_2}\neq \emptyset$ and let   $w^*$ be a point in this intersection. Then by (i) we have
\begin{align*}
R_1=\{ u\in E : w^*(u)=\|u\|\}=R_2,
\end{align*}
which is a contradiction.

(iv) Let $R$ be a $T$-set in $E$ and $w_1^*, w_2^*\in \Gamma_R$ be distinct. Being $E^*$ strictly convex we have $\|w_1^*+w_2^*\|<2$ while by (i) for any nonzero $u\in R$ we have $w_1^*(u)+
w_2^*(u)=2\|u\|$, a contradiction.
\end{proof}

Two $T$-sets $S$ and $R$ in a normed space $E$ are said to be {\em discrepant} if either $S\cap R=\{0\}$, or there exists a T-set $L$ in $E$  such that  $R\cap L=S\cap L=\{0\}$. A normed space $E$ is said to satisfy {\em property} $(D)$ if any two T-sets in $E$ are discrepant (see Definition 7.2.10 in \cite{Flem2}).
Since in a strictly convex space $E$ each  $T$-set is of the form $\{tu:t\ge 0\}$ for some nonzero $u\in E$, it follows that for any two distinct $T$-sets $S$ and $R$ in $E$ we have $S \cap R=\{0\}$, that is all strictly  convex spaces have  property $(D)$.
For some examples of non-strictly convex normed spaces with this property, see Examples 7.2.11 in \cite{Flem2}.

\section{Main results}
In this section,  introducing a  property, called $(D_w)$,  which is weaker than the property $(D)$, we characterize surjective (not necessarily linear) isometries $T:A \longrightarrow B$ between certain subspaces $A$ and $B$ of of $C_0(X,E)$ and $C_0(Y,F)$ where $F$ satisfies the property $(D_w)$.

\begin{defn}\label{D'}
{\rm We say that a  normed space $E$  satisfies {\em  property $(D_w)$} if there exists a $T$-set $R_{0}$ in $E$  which is discrepant to  any other $T$-set in $E$, that is for any $T$-set $R$ in $E$ distinct from $R_0$  we have either $R_0\cap R=\{0\}$ or there exists a $T$-set $L$ in $E$ such that  $R_0\cap L=R\cap L=\{0\}$.}
\end{defn}

{\bf Remark.} (i) We should note that if $E$ is a normed space with a $T$-set $R_0$ such that $R_0\cap R=\{0\}$ for all $T$-sets $R$ in $E$ distinct from $R_0$, then $E$ clearly  satisfies the  property $(D)$. In particular,  if $E$ is a normed space whose unit sphere $S(E)$ contains an element $e$ with $\St(e)=\{e\}$, then $E$ has property $(D)$. For an example of a non-strictly convex space $E$ such that $\St(e)=\{e\}$  for some $e\in S(E)$, see \cite{Jam}.

(ii) If  $E$ is a normed space and $w_0^*\in \cup_{R} \Gamma_R$, where the union is taken over all $T$-sets of $E$, such that $\St(w_0^*)=\{w_0^*\}$ then, using Proposition \ref{P0}, for the  $T$-set $R_0$ in $E$ containing $w_0^*$ we have  $R_0\cap R=\{0\}$ for all $T$-sets $R$ in $E$ distinct from $R_0$. Hence $E$ has property $(D)$.  In particular, if $E^*$ is strictly convex, then any pair of distinct $T$-sets of $E$ has trivial intersection and consequently $E$ has property $(D)$.


\vspace*{.25cm}

It is clear that property $(D_w)$ is weaker than the  property $(D)$.  We give an example which shows  that the converse statement does not necessarily  hold.
\begin{exam}\label{exm Dw}{\rm 
Let $E$ be a normed space whose closed unit ball  is the  subset $K$ of $\Bbb R^3$ as in Figure 1 with the origin in the center of $K$. Indeed, since $K$ is a symmetric compact convex subset of $\BR^3$ and origin is an interior point of $K$, it suffices to consider $E=\Bbb R^3$ with the  norm $\|\cdot\|$ defined by  $\|0\|=0$ and for each nonzero point $x\in \BR^n$, $\|x\|= \frac{1}{\max\{t\in \BR: tx \in K\}}$. Then $K$ is the closed unit ball of $E$ with respect to this norm. 

  The set $K$  consists of a cube and two pyramid on up and down.  Hence the unit sphere of $E$  has  twelve maximal convex subsets (four faces of cube, four faces of upper pyramid and four faces of bottom pyramid), that is $E$ has twelve $T$-sets. We note that $E$ does not satisfy the  property 
$(D)$. Indeed, letting $R$ and $S$ be the $T$-sets corresponding to  two adjacent faces in the cube, we have  $R\cap S=\{0\}$. Meanwhile, the other $T$-sets clearly have non-empty intersection with at least one of $R$ or $S$. Thus $E$ does not satisfy the property $(D)$. However, considering $R_0$ as the  $T$-set corresponding to one of the upper pyramid faces we see that $E$ satisfies the  property $(D_w)$.}
\begin{figure}
\centerline{ \includegraphics[scale=0.55]{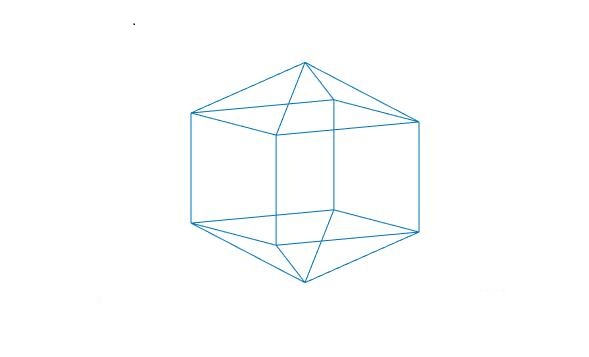}}
\caption{}
\end{figure}
\end{exam}

For a   subspace $A$ of $C_0(X,E)$ we say that $A$ is {\em $E$-separating} if
for distinct points  $x_1\neq x_2$ in $X$  and any  $u \in E$ there
exists $f\in A$ such that $f(x_1)=u $ , $\|f\|_\infty=\|u\|$ and $f(x_2)=0$.  We say that $A$ is {\em completely  regular} if for any $x$ in $X$ , any $u \in E$ and any neighbourhood $U$ of $x$ there exists $f\in A$ such that $f(x)=u $, $\|f\|_\infty=\|u\|$ and $f=0$ on $X\backslash U$.
It is clear that any completely  regular subspace of $C_0(X,E)$ is $E$-separating.

For examples of completely regular subspaces of $C(K,E)$, where $K$ is a compact Hausdorff space,  we can refer to the spaces  ${\rm Lip}^\alpha (K,E)$ of   $E$-valued Lipschitz functions of order $\alpha\in [0,1]$ on a compact Hausdorff space $K$, the space $C^n([0,1],E)$ of $E$-valued $n$-times continuously differentiable functions on $[0,1]$, and its subspace ${\rm Lip}^n([0,1],E)$ consisting of those functions whose derivatives are also Lipschitz functions. The  space ${\rm AC}(K,E)$ of all absolutely continuous $E$-valued functions on the compact subset $K$ of the real-line is also completely regular.
On the other hand, by \cite{Cen}, for any locally compact Hausdorff space $X$, the kernel of each  continuous complex-valued  regular Borel measure on $X$ is a completely regular subspace of  $C_0(X)$.

Next Lemma gives the general form of $T$-sets in  $E$-separating  subspaces  of $C_0(X,E)$.

\begin{lem}\label{lem t-set A}
Let $X$ be a locally compact Hausdorff space, $E$ be a real or complex normed space  and $A$ be an $E$-separating  subspace  of $C_0(X,E)$. Then for any T-set $S$ in  $E$ and any point $x\in X$, the set $(S,x)\cap A$ is a T-set in $A$ and conversely, any T-set in $A$ is of this form.
\end{lem}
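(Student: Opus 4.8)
The plan is to avoid any direct verification of maximality from the definition and instead reduce everything to the known classification of $T$-sets in the full space $C_0(X,E)$ (namely \cite[Lemma 7.2.2]{Flem2}) together with Zorn's lemma, using the $E$-separating hypothesis only through a short rigidity step. First I would record the easy half: since $(S,x)$ is a $T$-set, hence norm additive, in $C_0(X,E)$, and the norm on $A$ is the restriction of $\|\cdot\|_\infty$, the subset $(S,x)\cap A$ is automatically norm additive in $A$. I would also note that $(S,x)\cap A\neq\{0\}$: choosing a nonzero $u\in S$ and any point $y\neq x$, the $E$-separating property produces $f\in A$ with $f(x)=u$, $f(y)=0$ and $\|f\|_\infty=\|u\|$, so $f\in(S,x)\cap A$. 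Throughout I would assume $X$ has at least two points, treating the singleton case separately since the statement degenerates there.

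I would prove the converse direction next, because the direct direction will invoke it. Let $M$ be a $T$-set in $A$. Norm additivity is a condition on the norm alone, so $M$ is also a norm additive subset of $C_0(X,E)$; by Zorn's lemma it extends to a maximal one, that is, to a $T$-set of $C_0(X,E)$, which by \cite[Lemma 7.2.2]{Flem2} equals $(S,x)$ for some $T$-set $S$ in $E$ and some $x\in X$. Then $M\subseteq(S,x)\cap A$, and the right-hand side is norm additive in $A$; the maximality of $M$ forces $M=(S,x)\cap A$. This shows every $T$-set of $A$ has the asserted form.

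For the direct statement I would fix a $T$-set $S$ in $E$ and a point $x\in X$. Since $(S,x)\cap A$ is norm additive, Zorn's lemma places it inside some $T$-set $M$ of $A$, and by the converse just proved $M=(S',x')\cap A$ for some $T$-set $S'$ in $E$ and some $x'\in X$; hence $(S,x)\cap A\subseteq(S',x')\cap A$. The key point is that this inclusion forces $x'=x$ and $S'=S$, so that $(S,x)\cap A=M$ is itself a $T$-set. Here $E$-separation does the work. If $x'\neq x$, take $u\in S$ with $\|u\|=1$ and $f\in A$ with $f(x)=u$, $\|f\|_\infty=1$, $f(x')=0$; then $f\in(S,x)\cap A\subseteq(S',x')\cap A$ would give $\|f\|_\infty=\|f(x')\|=0$, a contradiction, so $x'=x$. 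With $x'=x$, for each unit vector $u\in S$ choose $y\neq x$ and $f\in A$ with $f(x)=u$, $\|f\|_\infty=1$, $f(y)=0$; then $f\in(S,x)\cap A\subseteq(S',x)\cap A$ yields $u=f(x)\in S'$. Since every $T$-set is closed under multiplication by nonnegative scalars, this gives $S\subseteq S'$, and maximality of $S$ upgrades this to $S=S'$.

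I expect the main obstacle to be precisely this last maximality argument for the direct direction: norm additivity of $(S,x)\cap A$ is transparent, but its maximality is not visible from the definition, and routing it through the classification in the full space plus the $E$-separating rigidity is what makes it go. The remaining delicate points are only bookkeeping: ensuring nondegeneracy of $X$ (so that a second point $y$ is available for the separating functions) and the nonemptiness $(S,x)\cap A\neq\{0\}$, both of which follow from $E$-separation as soon as $X$ has at least two points.
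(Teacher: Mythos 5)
Your proposal is correct and follows essentially the same route as the paper: both halves are handled by embedding a norm additive set into a maximal one, invoking the classification of $T$-sets in $C_0(X,E)$, and then using the $E$-separating property to force $x'=x$ and $S'=S$. The only differences are cosmetic (your explicit handling of the singleton/nondegeneracy edge case and the reduction to unit vectors via positive homogeneity of $T$-sets), which the paper passes over silently.
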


\begin{proof}
First assume that $R$ is a $T$-set in $A$. Since $R$ is  a norm additive subset of $C_0(X,E)$, it is contained in a $T$-set in $C_0(X,E)$. Hence  there exists a $T$-set $S$ in $E$  and a  point $x\in X$ such that $R\subseteq (S,x)$. We note that $R \subseteq (S,x)\cap A$ and clearly $(S,x)\cap A$ is a norm additive subset of $A$. Hence it follows from the maximality of $R$ that  $R=(S,x)\cap A$.

 To prove the converse statement, let $S$ be a $T$-set in $E$ and let $x\in X$. Since $(S,x)\cap A$ is a norm  additive subset of $A$ there exists  a T-set $R$ in  $A$ such that $(S,x) \cap A \subseteq R$. By the first part, there are a $T$-set $S_0$ in $E$ and a point $x_0\in X$ such that $R=(S_0,x_0)\cap A$. We claim  that $S=S_0$ and  $x=x_0$. Assume  that  $x\neq x_0$. Choosing a nonzero element  $e\in S$ it follows from the hypothesis that  there exists $f\in A$ such that $f(x)=e $, $\|f\|_\infty=\|e\|$ and $f(x_0)=0$. Thus  $f\in (S,x)\cap A\subseteq (S_0,x_0)\cap A$. Hence $\|f\|_\infty=\|f(x_0)\|=0$, a contradiction. This shows that $x_0=x$. Since $S$ and $S_0$ are both $T$-sets in $E$, to prove that $S=S_0$ it suffices to show that $S\subseteq S_0$.
 For this  suppose that  $e\in S$, and choose  $f\in A$ such that $f(x)=e$, $\|f\|_\infty=\|e\|$. Then  $f\in (S,x)\cap A\subseteq (S_0,x_0)\cap A$. In particular,  $e=f(x)=f(x_0)\in S_0$, that is  $S\subseteq S_0$, as desired.
\end{proof}

\begin{prop}\label{pro ch-boun}
Let $X$ be a locally compact Hausdorff space, $E$ be a real or complex normed space  and $A$ be an $E$-separating  subspace  of $C_0(X,E)$. Then for any $x\in X$ and any $T$-set $S$ in  $E$ there exists  $v^*\in \Gamma_S\cap \ext(E_1^*)$ such that $v^*\circ \delta_x\in \ext(A_1^*)$. In particular,  $\ch(A)=X$.
\end{prop}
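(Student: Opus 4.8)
The plan is to realize the pair $(S,x)$ as a $T$-set of $A$, apply the extreme-functional lemma there, and then decode the resulting extreme functional of $A^*$ by localizing it at $x$. Concretely, I would fix $x\in X$ and a $T$-set $S$ in $E$ and put $R=(S,x)\cap A$; by Lemma~\ref{lem t-set A} this is a $T$-set in $A$. Applying \cite[Lemma 7.2.4]{Flem2} to the normed space $A$ (its proof uses no completeness, as remarked in the preliminaries) gives some $\phi\in\Gamma_R\cap\ext(A_1^*)$, so that $\phi(f)=\|f\|_\infty$ for all $f\in R$. Because $\phi$ is an extreme point of $A_1^*$, \cite[Corollary 2.3.6]{Flem1} represents it as the restriction to $A$ of $v^*\circ\delta_{x'}$ with $v^*\in\ext(E_1^*)$ and $x'\in X$, i.e.\ $\phi(f)=v^*(f(x'))$ for $f\in A$. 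The whole argument then reduces to the two identifications $x'=x$ and $v^*\in\Gamma_S$.

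To force $x'=x$ I would argue by contradiction. Since $E\neq\{0\}$, the $T$-set $S$ contains a nonzero $e$; if $x'\neq x$, the $E$-separating property applied to the distinct points $x,x'$ yields $f\in A$ with $f(x)=e$, $\|f\|_\infty=\|e\|$ and $f(x')=0$. Then $f\in (S,x)\cap A=R$, so on one hand $\phi(f)=\|f\|_\infty=\|e\|>0$, while on the other $\phi(f)=v^*(f(x'))=v^*(0)=0$, a contradiction. Hence $x'=x$ and $\phi=v^*\circ\delta_x$ on $A$.

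It remains to check $v^*\in\Gamma_S$. As an extreme point of $E_1^*$, the functional $v^*$ lies on $S(E^*)$, so $\|v^*\|=1$. For an arbitrary $u\in S$, I would again invoke $E$-separating to obtain $f\in A$ with $f(x)=u$ and $\|f\|_\infty=\|u\|$ (vanishing at some auxiliary point), so that $f\in R$ and hence $v^*(u)=v^*(f(x))=\phi(f)=\|f\|_\infty=\|u\|$. Since $u\in S$ was arbitrary, $v^*\in\Gamma_S$, giving $v^*\in\Gamma_S\cap\ext(E_1^*)$ together with $v^*\circ\delta_x=\phi\in\ext(A_1^*)$, which is exactly the assertion. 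The last claim is then immediate: $x$ was arbitrary and, when $E\neq\{0\}$, $E$ carries at least one $T$-set, so by the definition of the Choquet boundary every $x\in X$ belongs to $\ch(A)$; that is, $\ch(A)=X$.

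The two peaking-function constructions are routine consequences of the $E$-separating hypothesis. The step I expect to be the genuine crux is the decoding of $\phi$: passing from ``$\phi$ is extreme on $A^*$ and norms $R$'' to the precise statements ``$\phi$ is supported at $x$'' (the identification $x'=x$) and ``$\phi$ reproduces exactly the functionals of $\Gamma_S$'' (the membership $v^*\in\Gamma_S$). This is where the representation of $\ext(A_1^*)$ from \cite[Corollary 2.3.6]{Flem1} and the separating functions must be combined carefully, since a priori the extreme functional produced on $A$ need not obviously be localized at $x$ nor restrict to an element of $\Gamma_S$.
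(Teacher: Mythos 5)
Your proposal is correct and follows essentially the same route as the paper's own proof: realize $(S,x)\cap A$ as a $T$-set in $A$ via Lemma~\ref{lem t-set A}, extract an extreme functional norming it from \cite[Lemma 7.2.4]{Flem2}, represent it as $v^*\circ\delta_{x'}$ via \cite[Corollary 2.3.6]{Flem1}, and use $E$-separating functions to force $x'=x$ and $v^*\in\Gamma_S$. No substantive differences.
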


\begin{proof}
Let $x\in X$ and $S$ be a T-set in $E$. By Lemma \ref{lem t-set A}, $(S,x)\cap A$ is a T-set in $A$. Hence  that there exists $l\in \ext(A_1^*)$ such that
$l(f)=\|f\|_\infty$ for all $f\in (S,x)\cap A$.
Since $l\in \ext(A_1^*)$,  there are $z\in X$ and $v^*\in \ext(E_1^*)$ such that $l=v^*\circ\delta_z$. Thus for all $f\in (S,x)\cap A$  we have
\begin{equation}\label{eq 3.1}
v^*(f(z))=v^*\circ\delta_z(f)=l(f)=\|f\|_\infty.
\end{equation}
We show that $z=x$ and $v^*\in\Gamma_S$. Suppose that $z\neq x$ and  let $u$ be a nonzero element of $S$. By hypothesis, there exists $f\in A$ such that
$f(x)=u$, $ f(z)=0$ and $\|f\|_\infty=\|u\|$. Then $f\in (S,x)\cap A$ and it follows from (\ref{eq 3.1}) that $0=v^*(f(z))=\|f\|_\infty=\|u\|$, a contradiction. Hence $z=x$.
To  show that $v^*\in\Gamma_S$, let $u\in S$ and choose  $f\in A$ such that $f(x)=u$ and $\|f\|_\infty=\|u\|$. Then  $f\in (S,x)\cap A$ and it follows from (\ref{eq 3.1}) that  $v^*(u)=\|u\|$. This shows that $v^*\in \Gamma_S$, as desired.
\end{proof}

\begin{prop}\label{pro isom}
Let $X$ and $Y$ be locally compact Hausdorff spaces, $E$ and $F$ be real or complex normed spaces (not necessarily complete) and $A$ and $B$ be $E$-separating and $F$-separating  subspaces of $C_0(X,E)$ and $C_0(Y,F)$, respectively. Let $T : A\longrightarrow B$ be a surjective real-linear isometry. If $x\in X$ and $y\in Y$ such that  $T((S,x)\cap A)=(R,y)\cap B$ where $S$ and $R$ are T-sets in $E$ and $F$, respectively, then there are $v^*\in \Gamma_S\cap {\rm ext}(E_1^*)$ and $w^*\in \Gamma_R\cap {\rm ext}(F_1^*)$ such that $T^*(w^*\circ\delta_y)=v^*\circ\delta_x$.
\end{prop}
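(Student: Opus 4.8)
The plan is to produce $w^*$ first by applying Proposition \ref{pro ch-boun} to $B$, then to obtain $v^*$ by pulling the functional $w^*\circ\delta_y$ back through the adjoint of $T$ and identifying the result exactly as in the proof of Proposition \ref{pro ch-boun}. Concretely, I would apply Proposition \ref{pro ch-boun} to the $F$-separating space $B$, the point $y$, and the $T$-set $R$ to obtain $w^*\in\Gamma_R\cap\ext(F_1^*)$ with $w^*\circ\delta_y\in\ext(B_1^*)$. Since $T$ is only real-linear, I would work with the associated real adjoint: regarding $A$ and $B$ as real normed spaces, $T$ is a surjective real-linear isometry, so $\phi\mapsto\phi\circ T$ is a surjective real-linear isometry from the real dual of $B$ onto that of $A$; being a weak*-homeomorphic affine bijection of the dual balls, it carries extreme points to extreme points. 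Using the standard isometric correspondence $\psi\leftrightarrow\R\,\psi$ between the $\BK$-dual and the real dual (which also preserves extreme points), this defines a functional $T^*(w^*\circ\delta_y)\in\ext(A_1^*)$ whose real part is $\R(w^*\circ\delta_y)\circ T$; in the real case this is just the ordinary adjoint.

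By the description of $\ext(A_1^*)$ recalled before the statement, from \cite[Corollary 2.3.6]{Flem1} there are $z\in X$ and $v^*\in\ext(E_1^*)$ with $T^*(w^*\circ\delta_y)=v^*\circ\delta_z$. To force $z=x$ and $v^*\in\Gamma_S$, I would compute as follows. For any $f\in(S,x)\cap A$ the hypothesis gives $Tf\in(R,y)\cap B$, so, since $w^*\in\Gamma_R$, we get $(w^*\circ\delta_y)(Tf)=w^*(Tf(y))=\|Tf(y)\|=\|Tf\|_\infty=\|f\|_\infty$, which is real. Taking real parts and using $T^*(w^*\circ\delta_y)=v^*\circ\delta_z$ yields $\R\,v^*(f(z))=\|f\|_\infty$; since $|v^*(f(z))|\le\|f(z)\|\le\|f\|_\infty$, this upgrades to $v^*(f(z))=\|f\|_\infty$ for every $f\in(S,x)\cap A$, which is precisely identity (\ref{eq 3.1}). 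I would then rerun the argument of Proposition \ref{pro ch-boun} verbatim: if $z\neq x$, pick a nonzero $u\in S$ and, using that $A$ is $E$-separating, choose $f\in A$ with $f(x)=u$, $f(z)=0$, $\|f\|_\infty=\|u\|$; then $f\in(S,x)\cap A$ but $v^*(f(z))=0\neq\|u\|=\|f\|_\infty$, a contradiction, so $z=x$. Finally, for arbitrary $u\in S$ choose $f\in A$ with $f(x)=u$ and $\|f\|_\infty=\|u\|$; then $v^*(u)=v^*(f(x))=\|f\|_\infty=\|u\|$, so $v^*\in\Gamma_S$. This gives $v^*\in\Gamma_S\cap\ext(E_1^*)$ together with $T^*(w^*\circ\delta_y)=v^*\circ\delta_x$.

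I expect the routine part to be this last computation, which merely re-executes the proof of Proposition \ref{pro ch-boun} for the functional $v^*\circ\delta_z$. The step requiring genuine care is the first one: because $T$ is only real-linear, one must pass to the real duals and use the real-part identification both to make sense of $T^*$ on the $\BK$-valued functionals and to guarantee that it sends $\ext(B_1^*)$ into $\ext(A_1^*)$. Keeping track of real parts is also exactly what allows the bound $|v^*(f(z))|\le\|f\|_\infty$ to promote the equality of real parts $\R\,v^*(f(z))=\|f\|_\infty$ to the full scalar equality invoked in the identification argument.
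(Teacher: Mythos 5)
Your proposal is correct and follows essentially the same route as the paper: obtain $w^*$ from Proposition \ref{pro ch-boun}, pull $w^*\circ\delta_y$ back through the adjoint isometry $T^*$, identify the resulting extreme point as $v^*\circ\delta_z$ via \cite[Corollary 2.3.6]{Flem1}, and then use the separating property on a nonzero $u\in S$ to force $z=x$ and $v^*\in\Gamma_S$. The only cosmetic difference is that the paper disposes of the complex case at the outset by replacing the complex duals with the isometrically isomorphic real duals, whereas you keep the $\BK$-valued functionals and track real parts explicitly; both are valid.
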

\begin{proof}
Since the real dual  of a complex normed space is isometrically isomorphic to its complex dual, without loss of generality we assume that $E$ and $F$ are real normed spaces. Let $A^*$ and $B^*$ denote the (real) duals of $A$ and $B$, respectively, and  $T^*: B^*\longrightarrow A*$ be the adjoint of $T$ as a bounded real-linear operator. Then $T^*$ is a surjective real-linear isometry. By Proposition \ref{pro ch-boun} for the T-set $R$ in $F$ and the point $y\in Y$ there exists $w^*\in \Gamma_R$ such that $w^*\circ\delta_y\in \ext(B_1^*)$. Since $T^*(w^*\circ\delta_y)$ is an extreme point of the unit ball of $A^*$ there are $v^*\in\ext(E_1^*)$ and $x_0\in X$ such that $T^*(w^*\circ\delta_y)=v^*\circ\delta_{x_0}$, that is
\begin{equation}\label{eq 3.6}
w^*(Tf(y))=v^*(f(x_0))\quad\quad (f\in A).
\end{equation}
It suffices to show that $v^*\in \Gamma_S$ and $x_0=x$. Suppose that $x_0\neq x$. Considering a nonzero element $u\in S$ we can find a function $f\in A$ such that
$f(x)=u$, $f(x_0)=0$ and $\|f\|_\infty=\|u\|$. Hence  $f\in (S,x)\cap A$ and consequently $Tf\in (R,y)\cap B$, that is $\|Tf(y)\|=\|Tf\|_\infty$ and $Tf(y)\in R$. Thus, using  (\ref{eq 3.6}) we  have
\begin{align*}
\|u\|=\|f\|_\infty=\|Tf\|_\infty=\|Tf(y)\|=w^*(Tf(y))=v^*(f(x_0))=0,
\end{align*}
which is a contradiction. This implies that  $x_0=x$.

 Now to show that $v^*\in\Gamma_S$, let  $u\in S$ be given and choose  $f\in A$ such that $f(x)=u$ and $\|f\|_\infty=\|u\|$. Then  $f\in (S,x)\cap A$ and consequently $Tf\in (R,y)\cap B$. Using  (\ref{eq 3.6}), once again, it  follows that  $v^*(u)=\|u\|$, as desired.
\end{proof}

\begin{lem}\label{lem intersect}
Let $X$ be a locally compact Hausdorff space, $E$ be a real or complex normed space and  $A$ be a completely regular subspace of $C_0(X,E)$.  If $S$ and $R$ are $T$-sets in $E$ and $x,z\in X$ such that $(S,x) \cap (R,z) \cap A=\{0\}$,  then $x=z$ and $S\cap R=\{0\}$.
\end{lem}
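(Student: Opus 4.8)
The plan is to establish the two conclusions separately, each time arguing by contradiction and exploiting complete regularity to manufacture an explicit nonzero element of $(S,x)\cap(R,z)\cap A$. Recall that a $T$-set, being a maximal norm additive set, contains nonzero elements, and that since $tS\subseteq S$ and $tR\subseteq R$ for all $t\ge 0$, I may freely rescale any chosen elements of $S$ or $R$ to have a prescribed norm.

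First I would show $x=z$. Suppose toward a contradiction that $x\neq z$. Using the Hausdorff property, choose disjoint open neighborhoods $U$ of $x$ and $V$ of $z$. Pick nonzero $u\in S$ and $w\in R$ and rescale so that $\|u\|=\|w\|=1$. Complete regularity of $A$ then yields $g\in A$ with $g(x)=u$, $\|g\|_\infty=1$ and $g=0$ on $X\setminus U$, and likewise $h\in A$ with $h(z)=w$, $\|h\|_\infty=1$ and $h=0$ on $X\setminus V$. Put $f=g+h$. Since $U\cap V=\emptyset$, we have $U\subseteq X\setminus V$ and $V\subseteq X\setminus U$, so $h$ vanishes on $U$ and $g$ vanishes on $V$; hence $f$ agrees with $g$ on $U$, with $h$ on $V$, and is zero elsewhere, giving $\|f\|_\infty=\max(\|g\|_\infty,\|h\|_\infty)=1$, while $f(x)=u$ and $f(z)=w$. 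Consequently $f(x)\in S$ and $f(z)\in R$ with $\|f(x)\|=\|f(z)\|=\|f\|_\infty$, so $f\in (S,x)\cap(R,z)\cap A$ and $f\neq 0$, contradicting the hypothesis. Therefore $x=z$.

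With $x=z$ now known, I would deduce $S\cap R=\{0\}$. If there were a nonzero $u\in S\cap R$, complete regularity would give $f\in A$ with $f(x)=u$ and $\|f\|_\infty=\|u\|$; then $f(x)=u$ lies in both $S$ and $R$ and $\|f\|_\infty=\|f(x)\|$, so $f\in (S,x)\cap(R,x)\cap A=(S,x)\cap(R,z)\cap A$ is nonzero, again contradicting the hypothesis. Hence $S\cap R=\{0\}$, which completes the proof.

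The only step demanding genuine care is the gluing in the first part: one must guarantee that the constructed $f$ attains its supremum norm \emph{simultaneously} at $x$ with value in $S$ and at $z$ with value in $R$. This is precisely why I take $u$ and $w$ of equal norm and arrange that $g$ and $h$ vanish on each other's neighborhoods, so that there is neither cancellation nor inflation of the norm and the equalities $\|f\|_\infty=\|f(x)\|=\|f(z)\|$ hold exactly. The remaining verifications are immediate from the definitions of the sets $(S,x)$ and $(R,z)$ and from complete regularity.
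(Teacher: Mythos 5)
Your proof is correct and follows essentially the same route as the paper's: disjoint neighborhoods plus complete regularity to glue two functions of equal norm supported near $x$ and $z$, yielding a nonzero element of $(S,x)\cap(R,z)\cap A$ when $x\neq z$, and then a single peaking function at a nonzero $u\in S\cap R$ for the second claim. The extra care you take in verifying $\|f\|_\infty=\|f(x)\|=\|f(z)\|$ is exactly the point the paper passes over with ``clearly.''
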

\begin{proof}
First assume that  $x\neq z$ and choose  disjoint  neighbourhoods $U$  and $V$ of $x$ and  $z$, respectively in $X$. Since any T-set is  closed under positive multiples, there are nonzero elements  $u\in S$ and $v\in R$ such that $\|u\|=\|v\|$. By hypothesis,  there exist $f,g\in A$ such that
$f(x)=u$, $\|f\|_\infty=\|f(x)\|$ and $ f=0$ on $X\backslash U$, and similarly $g(z)=v$,  $\|g\|_\infty=\|g(z)\|$ and $ g=0$ on $X\backslash V$. We put $F=f+g$. Then clearly  $\|F\|_\infty=\|u\|=\|v\|$, $F(x)=u$ and $F(z)=v$. Hence  $F\in (S,x) \cap (R,z) \cap A=\{0\}$,  a contradiction.

To show that $S\cap R=\{0\}$, let  $u$ be a nonzero element in $S\cap R$. By hypothesis, there exists $f\in A$ such that $f(x)=u$ , $\|f\|_\infty=\|u\|$, that is  $f\in (S,x) \cap (R,z) \cap A=\{0\}$, which is again a contradiction.
\end{proof}

Let $X$ and $Y$ be locally compact Hausdorff spaces, $E$ and $F$ be real or complex normed spaces (not necessarily complete), $A$ and $B$ be $E$-separating and $F$-separating  subspaces of $C_0(X,E)$ and $C_0(Y,F)$, respectively. Let $T : A\longrightarrow B$ be a surjective real-linear isometry. Being $T^{-1}$ an isometry, it  maps any $T$-set in $B$ to a  $T$-set in $A$. Hence   for any $T$-set $R$ in $F$ and a point $y\in Y$,  there exist a $T$-set $S$ in $E$ and a point  $x\in X$  such that $T^{-1}((R,y)\cap B)=(S,x)\cap A$. Using the separating property,  it is easy to see that the $T$-set $S$ and  the point $x\in X$ satisfying this equality are uniquely determined. Hence for each $T$-set $R$ in $F$ we can define a  function  $\varphi_R : Y\longrightarrow X$ such that for each $y\in Y$, there exists a $T$-set $S$ in $E$ satisfying    $T^{-1}((R,y)\cap B)=(S,\varphi_R(y))\cap A$.

\begin{lem}\label{well defn}
Let $X$ and $Y$ be locally compact Hausdorff spaces, $E$ and $F$ be real or complex normed spaces not assumed to be  complete. Let $A$ be a completely  regular subspace of $C_0(X,E)$, $B$ be an $F$-separating subspace of $C_0(Y,F)$ and $T : A\longrightarrow B$ be a surjective real-linear isometry. If $F$ satisfies the  property $(D_w)$,  then $\varphi_{R_1}=\varphi_{R_2}$ for all $T$-sets $R_1$ and $R_2$ in $F$.
\end{lem}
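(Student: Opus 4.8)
\section*{Proof proposal}

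The plan is to first establish an auxiliary claim handling $T$-sets with trivial intersection, and then to use property $(D_w)$ to funnel every $T$-set through the distinguished $T$-set $R_0$. The auxiliary claim is: \emph{if $R$ and $R'$ are $T$-sets in $F$ with $R\cap R'=\{0\}$, then $\varphi_R=\varphi_{R'}$.} To prove it, fix $y\in Y$ and recall that, by the definition of these functions, there are $T$-sets $S,S'$ in $E$ with $T^{-1}((R,y)\cap B)=(S,\varphi_R(y))\cap A$ and $T^{-1}((R',y)\cap B)=(S',\varphi_{R'}(y))\cap A$. I would first note that $(R,y)\cap(R',y)\cap B=\{0\}$: any $f$ in this set satisfies $f(y)\in R\cap R'=\{0\}$ and $\|f\|_\infty=\|f(y)\|=0$, hence $f=0$. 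Since $T$ is a surjective real-linear isometry it is a bijection with $T(0)=0$, and preimages commute with intersections, so applying $T^{-1}$ gives $(S,\varphi_R(y))\cap(S',\varphi_{R'}(y))\cap A=\{0\}$. Because $A$ is completely regular, Lemma \ref{lem intersect} now applies and yields $\varphi_R(y)=\varphi_{R'}(y)$ (and incidentally $S\cap S'=\{0\}$). As $y\in Y$ was arbitrary, $\varphi_R=\varphi_{R'}$.

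With the claim in hand, I would show that $\varphi_R=\varphi_{R_0}$ for every $T$-set $R$ in $F$, where $R_0$ is the $T$-set provided by property $(D_w)$. If $R=R_0$ this is trivial. Otherwise $R_0$ and $R$ are discrepant, and there are two cases. If $R_0\cap R=\{0\}$, the claim gives $\varphi_{R_0}=\varphi_R$ at once. In the remaining case there is a $T$-set $L$ in $F$ with $R_0\cap L=R\cap L=\{0\}$; applying the claim to the two pairs $(R_0,L)$ and $(R,L)$ gives $\varphi_{R_0}=\varphi_L$ and $\varphi_R=\varphi_L$, whence $\varphi_{R_0}=\varphi_R$. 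Finally, for arbitrary $T$-sets $R_1$ and $R_2$ in $F$ we conclude $\varphi_{R_1}=\varphi_{R_0}=\varphi_{R_2}$, which is the assertion of the lemma.

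The only genuinely delicate point is the transfer of the triviality of the intersection across $T^{-1}$ together with the verification that the image intersection in $A$ is exactly $\{0\}$, so that the hypotheses of Lemma \ref{lem intersect} are met; the rest is bookkeeping. The essential role of property $(D_w)$ is to supply a single $T$-set $R_0$ through which all pairwise comparisons can be routed, since two arbitrary $T$-sets $R_1$ and $R_2$ need not be discrepant to each other and so cannot in general be compared directly by the auxiliary claim.
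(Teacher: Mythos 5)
Your proposal is correct and follows essentially the same route as the paper: the paper's proof invokes ``the same arguments as in Theorem 7.2.13 of Fleming--Jamison together with Lemma \ref{lem intersect}'', which is precisely your case analysis on discrepancy (trivial intersection handled directly, otherwise routed through the intermediate $T$-set $L$), with Lemma \ref{lem intersect} converting the triviality of $(S,\varphi_R(y))\cap(S',\varphi_{R'}(y))\cap A$ into equality of the points. You have simply written out in full the details that the paper delegates to the cited reference.
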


\begin{proof}
Since $F$ satisfies the property $(D_w)$, there exists a $T$-set $R_0$ which is discrepant to all $T$-sets in $F$. It suffices to show that $\varphi_{R}=\varphi_{R_0}$ for all $T$-sets $R$ in $F$. Let $R$ be a given $T$-set in $F$ and $y\in Y$. Put $x=\varphi_R(y)$ and $x_0=\varphi_{R_0}(y)$. Hence there are $T$-sets $S$ and $S_0$ in $E$ such that   $T^{-1}((R,y)\cap B)=(S,x)\cap A$ and $T^{-1}((R_0,y)\cap B)=(S_0,x_0)\cap A$.  Since the T-sets $R$ and $R_0$ are discrepant, the same arguments  as in \cite[Theorem 7.2.13]{Flem2} together with Lemma \ref{lem intersect} imply that $x=x_0$.
\end{proof}

Using the above lemma, in the case that  $A$ is completely  regular, $B$ is  $F$-separating and $F$ satisfies the property $(D_w)$, we can define a function $\varphi: Y \longrightarrow X$ such that for each  $T$-set $R$ in $F$ there exists a $T$-set $S$ in $E$ satisfying $T^{-1}((R,y)\cap B)=(S,\varphi(y))\cap A$.   It is easy to see that  $\varphi$ is surjective. Indeed, given $x\in X$, let $S$ be an arbitrary $T$-set in $E$. Then there exist a $T$-set $R$ in $F$ and  a point $y\in Y$ such that $T((S,x)\cap A)=(R,y)\cap B$, that is $T^{-1}(R,y)\cap B)=(S,x)\cap A$ which concludes that $\varphi(y)=\varphi_R(y)=x$.

\begin{lem}\label{varphi}
Under the assumptions of Lemma \ref{well defn} if $f\in A$ and $y\in Y$ such that $f(\varphi(y))=0$,  then $(Tf)(y)=0$.
\end{lem}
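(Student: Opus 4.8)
The plan is to argue by contradiction, exploiting the fact that the value $(Tf)(y)$ can itself be used to select the $T$-set $R$ to which Proposition \ref{pro isom} is applied. Write $x=\varphi(y)$ and assume $f(x)=0$; suppose toward a contradiction that $v:=(Tf)(y)\neq 0$. The idea is to manufacture a single functional that, on the one hand, must vanish at $v$ because $f$ vanishes at $x$, and on the other hand must equal $\|v\|$ at $v$ because it arises from a $T$-set containing $v$.

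First I would note that the ray $\{tv:t\ge 0\}$ is norm additive, so it is contained in some $T$-set $R$ of $F$; in particular $v\in R$. By the definition of $\varphi$ recalled immediately before the statement, applied to this $R$ and the point $y$, there is a $T$-set $S$ in $E$ with $T^{-1}((R,y)\cap B)=(S,\varphi(y))\cap A=(S,x)\cap A$, equivalently $T((S,x)\cap A)=(R,y)\cap B$. Since $A$ is completely regular, hence $E$-separating, and $B$ is $F$-separating, Proposition \ref{pro isom} applies to this equality and furnishes $v^*\in\Gamma_S\cap\ext(E_1^*)$ and $w^*\in\Gamma_R\cap\ext(F_1^*)$ with $T^*(w^*\circ\delta_y)=v^*\circ\delta_x$, i.e.\ $w^*((Tg)(y))=v^*(g(x))$ for all $g\in A$.

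Taking $g=f$ and using $f(x)=0$ would then give $w^*(v)=v^*(f(x))=0$. But $v\in R$ and $w^*\in\Gamma_R$ force $w^*(v)=\|v\|>0$, the desired contradiction, so $(Tf)(y)=0$. The computation itself is essentially immediate once the setup is in place; the one genuinely load-bearing decision --- and the only place where any difficulty lies --- is the choice of $R$: it must be taken to be a $T$-set containing the value $(Tf)(y)$ itself, so that the $\Gamma_R$-functional produced by Proposition \ref{pro isom} is compelled to read off $\|v\|$ on $v$. The completely regular and separating hypotheses play only the supporting role of guaranteeing that $\varphi$ is well defined (via Lemma \ref{well defn}) and that Proposition \ref{pro isom} is available with the base point already pinned to $\varphi(y)$.
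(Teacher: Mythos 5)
Your proposal is correct and follows essentially the same argument as the paper's own proof: assume $(Tf)(y)\neq 0$, pick a $T$-set $R$ containing this value, invoke the definition of $\varphi$ and Proposition \ref{pro isom} to get $w^*\in\Gamma_R$ with $w^*((Tf)(y))=v^*(f(\varphi(y)))=0$, contradicting $w^*((Tf)(y))=\|(Tf)(y)\|>0$. The only addition is your explicit justification that a nonzero vector lies in some $T$-set, which the paper leaves implicit.
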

\begin{proof}
We put $x=\varphi(y)$ and $u=Tf(y)$. Assume on the contrary that  $u\neq 0$. Choose a $T$-set  $R$ in $F$ containing  $u$. Then, by the definition of $\varphi$, there exists a T-set $S$ in $E$ such that $T^{-1}((R,y)\cap B)=(S,x) \cap A$. Now by Proposition \ref{pro isom} there are $v^*\in \Gamma_S$ and $w^*\in \Gamma_R$ such that $T^*(w^*\circ\delta_y)=v^*\circ\delta_x$. Hence we  have
\[\|u\|=w^*(u)=w^*(Tf(y))=v^*(f(x))=0,\]   a contradiction.
\end{proof}

\begin{theorem}\label{thm main}
Let $X$ and $Y$ be locally compact Hausdorff spaces, $E$ and $F$ be real or complex normed spaces (not necessarily complete), $A$ be  a completely regular subspace of $C_0(X,E)$ and $B$ be an $F$-separating subspace of $C_0(Y,F)$. Let $T : A\longrightarrow B$ be a surjective real-linear isometry.  If $F$ satisfies the property $(D_w)$, then there exist a  continuous map  $\varphi:  Y
\longrightarrow X$, a family $\{V_y\}_{y\in Y}$ of bounded real-linear operators from $E$ to $F$ with $\|V_y\|\le 1$ such that for each $y\in Y$
\[Tf(y)= V_y(f(\varphi(y))) \qquad (f\in A).\]
Moreover, if  $E$ also satisfies the property $(D_w)$ and $B$ is completely regular, then $\varphi$ is a homeomorphism and each $V_y$ is a surjective isometry.
\end{theorem}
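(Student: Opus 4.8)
The plan is to build the operators $V_y$ from the value-dependence already encoded in Lemma \ref{varphi}, then establish continuity of $\varphi$ by a net argument, and finally derive the ``moreover'' statement by running the first assertion on the inverse isometry $T^{-1}$.

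First I would record that $Tf(y)$ depends on $f$ only through $f(\varphi(y))$: if $f_1,f_2\in A$ satisfy $f_1(\varphi(y))=f_2(\varphi(y))$, then $(f_1-f_2)(\varphi(y))=0$, so $T(f_1-f_2)(y)=0$ by Lemma \ref{varphi}, and real-linearity of $T$ gives $Tf_1(y)=Tf_2(y)$. Since $A$ is completely regular, for every $u\in E$ there is $f\in A$ with $f(\varphi(y))=u$ and $\|f\|_\infty=\|u\|$, so I may define $V_y\colon E\to F$ by $V_y(u)=Tf(y)$ for any such $f$; this is well defined by the previous observation. Real-linearity of $V_y$ follows from that of $T$ together with the fact that a real-linear combination of admissible functions is again admissible, while $\|V_y(u)\|=\|Tf(y)\|\le\|Tf\|_\infty=\|f\|_\infty=\|u\|$ gives $\|V_y\|\le 1$. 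Taking $u=f(\varphi(y))$ for an arbitrary $f\in A$ yields the representation $Tf(y)=V_y(f(\varphi(y)))$.

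Next I would prove $\varphi$ continuous. Fix $y_0\in Y$ and set $x_0=\varphi(y_0)$. The decisive preliminary point is that $V_{y_0}\neq 0$: choosing any $T$-set $R$ in $F$, there is a $T$-set $S$ in $E$ with $T^{-1}((R,y_0)\cap B)=(S,x_0)\cap A$, and Proposition \ref{pro isom} supplies $v^*\in\Gamma_S\cap\ext(E_1^*)$ and $w^*\in\Gamma_R$ with $w^*(Tf(y_0))=v^*(f(x_0))$ for all $f\in A$; picking $u$ with $v^*(u)\neq 0$ and $f\in A$ with $f(x_0)=u$ forces $V_{y_0}(u)=Tf(y_0)\neq 0$. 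Now suppose $\varphi$ were discontinuous at $y_0$: there would be a neighbourhood $W$ of $x_0$ and a net $y_\alpha\to y_0$ with $\varphi(y_\alpha)\notin W$ for all $\alpha$. Since $X$ is locally compact Hausdorff, choose open $U$ with $x_0\in U\subseteq\overline U\subseteq W$, fix $u$ with $V_{y_0}(u)\neq 0$, and use complete regularity to take $f\in A$ with $f(x_0)=u$ and $f=0$ on $X\setminus U$. Then $\varphi(y_\alpha)\notin W\supseteq U\supseteq\{f\neq 0\}$, so $f(\varphi(y_\alpha))=0$ and hence $Tf(y_\alpha)=V_{y_\alpha}(0)=0$ for all $\alpha$; but continuity of $Tf\in C_0(Y,F)$ gives $Tf(y_\alpha)\to Tf(y_0)=V_{y_0}(u)\neq 0$, a contradiction. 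Thus $\varphi$ is continuous.

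For the ``moreover'' part I would apply the first assertion to $T^{-1}\colon B\to A$, which under the added hypotheses is a surjective real-linear isometry with $B$ completely regular, $A$ an $E$-separating subspace, and $E$ satisfying $(D_w)$; this produces a continuous $\psi\colon X\to Y$ and operators $W_x\colon F\to E$ with $\|W_x\|\le 1$ and $T^{-1}g(x)=W_x(g(\psi(x)))$. Putting $g=Tf$ gives, for all $f\in A$ and $x\in X$,
\[ f(x)=W_x\big(V_{\psi(x)}(f(\varphi(\psi(x))))\big). \]
If $\varphi(\psi(x))\neq x$ for some $x$, the $E$-separating property of $A$ yields $f\in A$ with $f(x)\neq 0$ and $f(\varphi(\psi(x)))=0$, forcing $f(x)=0$, a contradiction; hence $\varphi\circ\psi=\mathrm{id}_X$, and the symmetric computation with $g\in B$ gives $\psi\circ\varphi=\mathrm{id}_Y$, so $\varphi$ is a homeomorphism. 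Letting $x=\varphi(y)$ (whence $\psi(x)=y$) and letting $f(x)$ range over all of $E$ by complete regularity of $A$ gives $W_{\varphi(y)}\circ V_y=\mathrm{id}_E$, while the analogous identity on the $B$-side (using complete regularity of $B$) gives $V_y\circ W_{\varphi(y)}=\mathrm{id}_F$; thus $V_y$ is a bijection, and $\|u\|=\|W_{\varphi(y)}(V_y u)\|\le\|V_y u\|\le\|u\|$ shows $\|V_y u\|=\|u\|$, so each $V_y$ is a surjective isometry.

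I expect the continuity of $\varphi$ to be the main obstacle: it is the only genuinely topological step, and it relies simultaneously on local compactness of $X$ (to trap $\{f\neq 0\}$ inside $W$) and on the non-vanishing of $V_{y_0}$, which must be extracted from Proposition \ref{pro isom}.
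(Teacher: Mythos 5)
Your proof is correct and follows essentially the same route as the paper: define $V_y$ via Lemma \ref{varphi}, prove continuity of $\varphi$ by trapping the support of a suitable $f$ inside the given neighbourhood (the paper does this directly with the open set $\{y: Tf(y)\neq 0\}$ rather than with nets, but the idea is identical), and deduce the ``moreover'' clause by symmetry. You in fact supply more detail than the paper at two points it leaves implicit --- the verification that $V_{y_0}\neq 0$ and the entire second part, which the paper dismisses as ``easily verified'' --- and those details check out.
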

\begin{proof}
Let  $\varphi:Y \longrightarrow X$ be the function defined  before. For each $y\in Y$ and each $e\in E$ by the hypotheses there exists $f\in A$ such that $f(\varphi(y))=e$.  Put  $V_y(e)=Tf(y)$. We note that, by Lemma \ref{varphi}, the definition of $V_y(e)$ is independent of  the function $f\in A$ satisfying $f(\varphi(y))=e$.  Then $V_y: E \longrightarrow F$ is clearly  a real-linear operator satisfying
 \begin{align} \label{eq 3.4}
 Tf(y) = V_y(f(\varphi(y)) \;\;\;\;\; (f\in A).
 \end{align}
Since for each $y\in Y$ and $e\in S(E)$ we can choose a function $f\in A$ with $\|f\|_\infty=1$ and $f(\varphi(y))=e$, it follows easily that $\|V_y\|\le 1$.

To prove that $\varphi$ is continuous, assume that $y_0\in Y$ and $U$ is an open neighbourhood of $\varphi(y_0)$ in $X$.
 Since $V_{y_0}\neq 0$ there exists $e\in E$ such that $V_{y_0}(e)\neq 0$. Choose  $f\in A$  such that $f(\varphi(y_0))=e$ and $f=0$ on $X\backslash U$. Then $W=\{y\in Y: Tf(y) \neq 0\}$ is a neighbourhood of $y_0$ and the equality (\ref{eq 3.4}) implies that $\varphi(W) \subseteq U$. Hence $\varphi$ is continuous.

 The second part of the theorem is easily verified.
\end{proof}

In the next theorem, we consider the compact case and, using Theorem \ref{thm main}, we  characterize surjective isometries between certain subspaces $A$ and $B$ of $C(X,E)$ and $C(Y,F)$, respectively endowed with some norms rather than supremum norms. Motivated by  the property {\bf P}  introduced in \cite{Ar} and the property {\bf Q} introduced in \cite{Bot1} for an isometry $T:A \longrightarrow B$, we consider the  property (St) introduced in the earlier work of the authors \cite{MS}. We have compared the new defined property (St) with the properties  {\bf P} and  {\bf Q} in \cite{MS}. Indeed, the property {\bf Q} implies the property (St) and in the case where $F$ is strictly convex (this is assumed in \cite{Ar}) the property {\bf P} also implies the property (St).

\begin{defn}\cite[Definition 3.4]{MS}
Let $X$ and $Y$ be compact Hausdorff spaces and let $E$ and $F$ be real or complex normed spaces. Assume that  $A$ and $B$ are  subspaces of $C(X,E)$ and $C(Y,F)$, respectively, equipped with the norms of the form
\[ \|\cdot \|_A= \max(\|\cdot\|_\infty, p(\cdot)) \;{\rm and}\; \|\cdot \|_B= \max(\|\cdot\|_\infty, q(\cdot)),\]
where $p$ and $q$ are seminorms on $A$ and $B$, respectively, whose kernels contain the constant functions. We say that a surjective real-linear isometry  $T:A \longrightarrow B$ has property  {\rm (St)} if
\begin{quote}
{\rm (St)} For each $u\in F$ and $y_0\in Y$ there exists $v\in
S(E)$ such that $\|T\hat{v}(y_0)+u\|>\|u\|$, i.e.
$\frac{T(\hat{v})(y_0)}{\|T(\hat{v})(y_0)\|} \in \St_w(u)$.
\end{quote}
\end{defn}

 \begin{prop}\label{Pr}\cite[Proposition 3.5]{MS}
Let $X$ and $Y$ be compact Hausdorff spaces and let $E$ and $F$ be real or complex  normed spaces, not assumed to be complete. Assume that
 $A$ and $B$ are subspaces of $C(X,E)$ and $C(Y,F)$, respectively containing constants and  $\|\cdot\|_A$ and $\|\cdot \|_B$ are norms on $A$ and $B$ such that
 \[ \|\cdot \|_A= \max(\|\cdot\|_\infty, p(\cdot)) \;{\rm and}\; \|\cdot \|_B= \max(\|\cdot\|_\infty, q(\cdot))\]
for some seminorms $p$ and $q$ on $A$ and $B$, respectively, whose kernels contain the constants.
If  $T:A \longrightarrow B$ is a surjective real-linear isometry and $T$ and $T^{-1}$ satisfy {\rm (St)}, then  $T$ is an isometry with respect to the supremum norms on $A$ and $B$.
  \end{prop}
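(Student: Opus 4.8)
The plan is to establish the two inequalities $\|Tf\|_\infty\ge\|f\|_\infty$ and $\|Tf\|_\infty\le\|f\|_\infty$ by a symmetric use of the two hypotheses. Since $T^{-1}:B\to A$ is again a surjective real-linear isometry for the norms $\|\cdot\|_B,\|\cdot\|_A$ and also satisfies (St), any estimate deduced for $T$ from property (St) of $T^{-1}$ will, applied to $T^{-1}$ and $g=Tf$, give the reverse estimate; so it is enough to run one argument and invoke this symmetry. First I would record the elementary facts about constants: because $p$ and $q$ annihilate constant functions, $\|\hat v\|_A=\|v\|$, hence $\|T\hat v\|_B=\|v\|$, and therefore $\|T\hat v\|_\infty\le\|v\|$ and $q(T\hat v)\le\|v\|$ for every $v$. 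I would also use the two equivalent forms of (St): the inequality $\|T\hat v(y_0)+u\|>\|u\|$ is the same as $T\hat v(y_0)/\|T\hat v(y_0)\|\in\St_w(u)$, and, by convexity of $s\mapsto\|u+sT\hat v(y_0)\|$ together with the case $s=1$, it upgrades to $\|u+sT\hat v(y_0)\|>\|u\|$ for all $s\ge 1$; in particular $T\hat v(y_0)\ne 0$.

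The core estimate is a scaling argument. Fix $f\in A$, set $M=\|f\|_\infty$, and choose $x_0$ with $\|f(x_0)\|=M$ (compactness of $X$). Applying (St) for $T^{-1}$ at $x_0$ to the vector $u=-nf(x_0)$ produces, for each $n$, a unit vector $w_n\in S(F)$ with $\|T^{-1}\hat{w_n}(x_0)-nf(x_0)\|>nM$, whence $\|T^{-1}\hat{w_n}-nf\|_\infty>nM$. Transporting through the isometry and using that $q$ kills the constant $\hat{w_n}$ gives $\max(\|\hat{w_n}-nTf\|_\infty,\,n\,q(Tf))>nM$ for every $n$. If $q(Tf)\le M$ the second term never wins, so $\|\hat{w_n}-nTf\|_\infty>nM$; combined with $\|\hat{w_n}-nTf\|_\infty\le 1+n\|Tf\|_\infty$ and $n\to\infty$ this forces $\|Tf\|_\infty\ge M$. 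Thus every $f$ obeys the dichotomy ``$\|Tf\|_\infty\ge\|f\|_\infty$ or $q(Tf)>\|f\|_\infty$'', and symmetrically (from (St) of $T$) ``$\|f\|_\infty\ge\|Tf\|_\infty$ or $p(f)>\|Tf\|_\infty$''. Whenever the two good alternatives both hold we already have $\|Tf\|_\infty=\|f\|_\infty$. Feeding this into the isometry identity $\max(\|f\|_\infty,p(f))=\max(\|Tf\|_\infty,q(Tf))$ shows that the only remaining obstruction is the \emph{doubly seminorm-dominated} configuration: reducing (say) to $\|f\|_\infty>\|Tf\|_\infty$, the dichotomies force $p(f)=q(Tf)>\|f\|_\infty\ge\|Tf\|_\infty$.

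Ruling out this configuration is the step I expect to be the main obstacle. The natural attack is a transition analysis. For every unit vector $v$ and every $\lambda<\lambda^\ast:=p(f)-\|f\|_\infty$ one has $\|f+\lambda\hat v\|_\infty\le\|f\|_\infty+\lambda<p(f)$, so $\|f+\lambda\hat v\|_A=p(f)$; transporting, $\|Tf+\lambda T\hat v\|_B=p(f)$, whence $\|Tf+\lambda T\hat v\|_\infty\le p(f)$ throughout $[0,\lambda^\ast)$. Now choose $v$ by (St) for $T$ at a point $y_1$ with $\|Tf(y_1)\|=\|Tf\|_\infty$, taking $u=Tf(y_1)$: by $\St_w$-monotonicity the map $\lambda\mapsto\|Tf(y_1)+\lambda T\hat v(y_1)\|$ is increasing and tends to $\infty$ (as $T\hat v(y_1)\ne 0$), so it eventually exceeds $p(f)$, and the aim is to force this already for some $\lambda<\lambda^\ast$, contradicting the bound just obtained. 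The hard part is precisely that (St) controls only the \emph{direction} of the constant-image $T\hat v(y_1)$ and not its \emph{magnitude}: with the crude bound $\|T\hat v(y_1)\|\le 1$ this increase proceeds at unit rate, the same scale as the threshold $\lambda^\ast$, so the contradiction is only borderline. Closing it — obtaining genuine magnitude control of the images of constants, or replacing this pointwise argument by a global extremal one over all $f$ — is the crux, and it is here that both (St) conditions, rather than their weak directional content alone, must be used essentially.
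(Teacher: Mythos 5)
The paper itself offers no proof of this proposition---it is quoted verbatim from \cite{MS}---so there is no in-paper argument to compare against; judged on its own terms, your attempt contains a genuine and self-acknowledged gap. The first half is correct: the scaling argument with $u=-nf(x_0)$ and property (St) for $T^{-1}$ does yield, for every $h\in A$, the implication ``$q(Th)\le\|h\|_\infty\Rightarrow\|Th\|_\infty\ge\|h\|_\infty$'', and symmetrically with the roles of $T$ and $T^{-1}$ exchanged. But the residual ``doubly seminorm-dominated'' configuration $p(f)=q(Tf)>\|f\|_\infty>\|Tf\|_\infty$ is never ruled out, and your own diagnosis of the transition analysis is accurate: since $\|T\hat{v}(y_1)\|\le 1$, the map $\lambda\mapsto\|Tf(y_1)+\lambda T\hat{v}(y_1)\|$ grows at most at unit rate, which is exactly the rate at which the threshold $\lambda^\ast=p(f)-\|f\|_\infty$ recedes, so no contradiction materializes. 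As written, the proposal is not a proof.

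The gap can, however, be closed from precisely where you stand, with no further control on the magnitude of images of constants. Do not apply your dichotomy to $f$ itself; apply it to $h=f+\hat{v}$ with $v=\tfrac{r}{M}f(x_0)$, where $M=\|f\|_\infty=\|f(x_0)\|>0$ and $r\ge\max(p(f)-M,0)$. Then $\|h\|_\infty=\|h(x_0)\|=M+r\ge p(f)=p(h)$, so $\|h\|_A=M+r$, whence $q(Th)\le\|Th\|_B=\|h\|_A=M+r=\|h\|_\infty$: the seminorm alternative of your dichotomy is excluded automatically for $h$, and you conclude $\|Th\|_\infty\ge M+r$. Since $\|T\hat{v}\|_\infty\le\|T\hat{v}\|_B=\|\hat{v}\|_A=\|v\|=r$, this gives $\|Tf\|_\infty\ge\|Th\|_\infty-\|T\hat{v}\|_\infty\ge M=\|f\|_\infty$. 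The reverse inequality follows by running the identical argument for $T^{-1}$, which uses (St) for $T$. In short, the missing idea is to translate $f$ by a large constant aligned with $f$ at a peak point, so that the supremum norm dominates the seminorm and the bad alternative disappears, rather than attacking the seminorm-dominated configuration head on.
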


Using the above proposition we get the next result concerning surjective isometries between completely regular subspaces of functions with respect to some  norms.

\begin{theorem}\label{thm app}
Let $X$ and $Y$ be compact Hausdorff spaces, $E$ and $F$ be real
or complex normed spaces, not necessarily complete such that $F$ satisfies the property $(D_w)$. Let $A$  and $B$ be subspaces of  $C(X,E)$ and  $C(Y,F)$, respectively  containing
the constants such that $A$ is completely regular and $B$ is $F$-separating. Let $\|\cdot\|_A$ and $\|\cdot\|_B$ be norms on $A$ and $B$, respectively, of the  form
\[ \|\cdot \|_A= \max(\|\cdot\|_\infty, p(\cdot)) \;{\rm and}\; \|\cdot \|_B= \max(\|\cdot\|_\infty, q(\cdot))\]
for some seminorms $p$ and $q$ on $A$ and $B$, respectively, whose kernels contain the constants.
Then for any surjective real-linear isometry $T: A\longrightarrow B$  such that $T$ and $T^{-1}$ satisfy {\rm (St)} there exist a  surjective continuous map  $\varphi:  Y
\longrightarrow X$, a family $\{V_y\}_{y\in Y}$ of bounded real-linear operators from $E$ to $F$ with $\|V_y\|\le 1$ such that for each $y\in Y$
\[Tf(y)= V_y(f(\varphi(y))) \qquad (f\in A).\]
Moreover, if  $E$ also satisfies the property $(D_w)$ and $B$ is completely regular,  then $\varphi$ is a homeomorphism and each $V_y$ is a surjective isometry.
\end{theorem}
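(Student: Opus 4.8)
The plan is to reduce the statement to the supremum-norm case already settled in Theorem \ref{thm main}, using Proposition \ref{Pr} as the bridge. The essential observation is that although $A$ and $B$ carry the norms $\|\cdot\|_A=\max(\|\cdot\|_\infty,p(\cdot))$ and $\|\cdot\|_B=\max(\|\cdot\|_\infty,q(\cdot))$, the weighted-composition representation we seek is governed entirely by the supremum-norm geometry. Proposition \ref{Pr} tells us precisely that a surjective real-linear isometry $T$ for which both $T$ and $T^{-1}$ satisfy {\rm (St)} is automatically a surjective real-linear isometry for the supremum norms.

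First I would invoke Proposition \ref{Pr}. Since $A$ and $B$ contain the constants, the norms have the required form with $p,q$ seminorms whose kernels contain the constants, and both $T$ and $T^{-1}$ satisfy {\rm (St)} by hypothesis, the proposition applies verbatim and yields that $T:(A,\|\cdot\|_\infty)\longrightarrow (B,\|\cdot\|_\infty)$ is again a surjective real-linear isometry. Next I would check that the hypotheses of Theorem \ref{thm main} are met for this supremum-norm isometry. Since $X$ and $Y$ are compact Hausdorff, hence locally compact, and every continuous function on a compact space trivially vanishes at infinity, we have $C(X,E)=C_0(X,E)$ and $C(Y,F)=C_0(Y,F)$; thus $A$ and $B$ are subspaces of $C_0(X,E)$ and $C_0(Y,F)$ with the supremum norm. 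The notions ``completely regular'' and ``$F$-separating'' are defined purely in terms of $\|\cdot\|_\infty$, so they carry over unchanged: $A$ is a completely regular subspace of $C_0(X,E)$ and $B$ is an $F$-separating subspace of $C_0(Y,F)$. Together with the assumption that $F$ satisfies property $(D_w)$, all hypotheses of Theorem \ref{thm main} are in force.

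Applying Theorem \ref{thm main} then produces a continuous map $\varphi:Y\longrightarrow X$ and a family $\{V_y\}_{y\in Y}$ of bounded real-linear operators from $E$ to $F$ with $\|V_y\|\le 1$ satisfying $Tf(y)=V_y(f(\varphi(y)))$ for all $f\in A$ and $y\in Y$. Surjectivity of $\varphi$ is exactly the observation recorded in the discussion preceding Lemma \ref{varphi}, obtained from the definition of $\varphi$ applied to an arbitrary point $x\in X$. For the final assertion, if in addition $E$ satisfies property $(D_w)$ and $B$ is completely regular, the ``moreover'' clause of Theorem \ref{thm main} gives directly that $\varphi$ is a homeomorphism and each $V_y$ is a surjective isometry.

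I expect no genuinely hard analytic step here, since all the substance has been absorbed into Proposition \ref{Pr} and Theorem \ref{thm main}. The only point demanding care is the bookkeeping: that the structural hypotheses (completely regular, $F$-separating, property $(D_w)$) are norm-independent, and that the compactness identification $C(X,E)=C_0(X,E)$, $C(Y,F)=C_0(Y,F)$ legitimizes the application of Theorem \ref{thm main} after the norm on $T$ has been switched to the supremum norm via Proposition \ref{Pr}. Once this reduction is made explicit, the conclusion follows by concatenating the two cited results.
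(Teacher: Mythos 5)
Your proposal is correct and follows exactly the paper's own route: Proposition \ref{Pr} converts $T$ into a supremum-norm isometry, after which Theorem \ref{thm main} applies (the paper's proof is literally this one-line reduction, which you have merely spelled out in more detail, including the harmless identification $C(X,E)=C_0(X,E)$ for compact $X$).
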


\begin{proof}
It  follows immediately from Proposition \ref{Pr} and Theorem \ref{thm main}.
\end{proof}

\end{document}